\newtheorem*{theorem*}{Theorem}
\newtheorem{lemma}{Lemma}
\newtheorem*{corollary*}{Corollary}
\newtheorem*{remark*}{Remark}
\newtheorem*{definition*}{Definition}
\title{Products of Sylow subgroups\\in Suzuki and Ree groups}
\author{Andrei Smolensky}
\address{Department of Mathematics and Mechanics, Saint Petersburg State University,\\
Universitetsky prospekt, 28, 198504, Peterhof, Saint Petersburg, Russia}
\email{andrei.smolensky@gmail.com}
\keywords{Suzuki group, Ree group, groups of Lie type, twisted groups, very twisted groups, unitriangular factorization}
\subjclass[2010]{20D06, 20D20, 20D40}
\date{\today}
\begin{document}
\begin{abstract}
An explicit and elementary proof is given to the fact that Suzuki and Ree groups can be decomposed into the product of $4$ of their Sylow $p$-subgroups, where $p$ is the defining characterictic.
\end{abstract}
\maketitle{}

The problem of factorizing a finite simple group of Lie type into the product of its Sylow $p$-subgroups, where $p$ is the defining characteristic, has recently got some attention. For example, in \cite{BabNikPybProdGr} it is claimed that each of them is a product of $5$ Sylow subgroups, and in \cite{LiebeckPyberFiniteLinear} an estimate of $13$ factors is given for groups other than ${}^2\mathsf{F}_4$, and of $25$ for the latter. It has been used, for example, in \cite{KasLubNikFinisigExpanders} and \cite{GarLevConjugFactor}.

In \cite{VavSmSuUnitrEng} a simple proof is given to the fact that Chevalley groups of normal types admit the factorization of length $4$, and so do twisted groups of types ${}^2\mathsf{A}_{2n+1}$, ${}^2\mathsf{D}_n$, ${}^3\mathsf{D}_4$ and ${}^2\mathsf{E}_6$. The proof is based on Tavgen rank reduction theorem \cite{TavgenBoundGen,TavgenBoundGenProc}, which doesn't care about the base ring at all, and the decomposition of $SL_2$, which can be carried over any ring of stable rank $1$. A slightly more technical case of ${}^2\mathsf{A}_{2n}$ is elaborated in \cite{SmoUnitrTwisted} by explicitly factorizing the group of type ${}^2\mathsf{A}_2$.

In the present note we analyze the remaining cases of so-called very twisted groups, namely Suzuki group ${}^2\mathsf{C}_2$, small Ree group ${}^2\mathsf{G}_2$ and large Ree group ${}^2\mathsf{F}_4$.

Together with the previously known results they give the following
\begin{theorem*}
Every finite simple group of Lie type in characteristic $p$ is a product of $4$ of its Sylow $p$-subgroups.
\end{theorem*}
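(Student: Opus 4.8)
The plan is to reduce the theorem, via the results already in the literature, to the three very twisted families and to settle those. All Chevalley groups of normal type, together with the twisted groups of types ${}^2\mathsf{A}_{2n+1}$, ${}^2\mathsf{A}_{2n}$, ${}^2\mathsf{D}_n$, ${}^3\mathsf{D}_4$ and ${}^2\mathsf{E}_6$, are handled in \cite{VavSmSuUnitrEng} and \cite{SmoUnitrTwisted}, so what remains is ${}^2\mathsf{C}_2(q)$, ${}^2\mathsf{G}_2(q)$ and ${}^2\mathsf{F}_4(q)$ (the handful of cases in which the simple group is the derived subgroup rather than the whole group of Lie type --- notably the Tits group ${}^2\mathsf{F}_4(2)'$ --- being inspected separately). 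For each of the three groups, let $U$ be the unipotent radical of a Borel subgroup $B = U \rtimes H$; this is a Sylow $p$-subgroup, and the opposite unipotent subgroup $U^{-}$ is conjugate to $U$, hence also Sylow. The target is the factorization $G = U^{-} U U^{-} U$ into four Sylow $p$-subgroups.

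For the two rank-$1$ families ${}^2\mathsf{C}_2(q)$ and ${}^2\mathsf{G}_2(q)$ I would work directly from the Bruhat decomposition $G = B \sqcup B n B$, with $n$ the nontrivial element of the Weyl group. It suffices to prove two inclusions: first, that the big cell $B n B$ lies in $U U^{-} U$, and second, that the Borel $B$ lies in $U^{-} U U^{-}$. Since $1 \in U$ and $1 \in U^{-}$, both triple products are then contained in $U^{-} U U^{-} U$, and $G = U^{-} U U^{-} U$ follows. The first inclusion is the twisted analogue of the classical fact that a matrix in $\mathrm{SL}_{2}$ with nonzero lower-left entry is a product of an upper, a lower and an upper unitriangular matrix; one proves it by explicitly solving, with Ree's commutator relations and the parametrization of the root subgroups by the Tits endomorphism $\theta$ (whose square is the Frobenius $x \mapsto x^{p}$), the system of equations that writes a given element of $B n B$ as such a product. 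The second inclusion reduces to writing each $h \in H$ as $v_{1}^{-} v_{2} v_{3}^{-}$ with $v_{1}^{-}, v_{3}^{-} \in U^{-}$ and $v_{2} \in U$ --- the twisted counterpart of the identity $h(t) = x_{-}(\ast)\, x_{+}(\ast)\, x_{-}(\ast)$ in $\mathrm{SL}_{2}$ --- elements of $U$ itself lying trivially in $U^{-} U U^{-}$. This is a finite explicit computation, modelled on the treatment of ${}^2\mathsf{A}_2 = \mathrm{SU}_3$ in \cite{SmoUnitrTwisted}.

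For ${}^2\mathsf{F}_4(q)$, which has a BN-pair of rank $2$, I would instead invoke Tavgen's rank reduction theorem \cite{TavgenBoundGen} in the form applied to twisted groups in \cite{VavSmSuUnitrEng}: a length-$L$ unitriangular factorization of $G$ follows from length-$L$ factorizations of the subgroups attached to its maximal parabolic subgroups, that is, to its rank-$1$ residues. For ${}^2\mathsf{F}_4(q)$ these residues are of types $\mathsf{A}_1(q)$ and ${}^2\mathsf{C}_2(q)$ --- the point- and line-residues of the generalized octagon associated to ${}^2\mathsf{F}_4(q)$, carrying $q+1$ and $q^{2}+1$ points. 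The first admits a length-$4$ factorization by \cite{VavSmSuUnitrEng} (or directly, since $\mathrm{SL}_2$ over a field has stable rank $1$), and the second by the rank-$1$ analysis above; hence so does ${}^2\mathsf{F}_4(q)$.

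The main obstacle is the rank-$1$ step for ${}^2\mathsf{C}_2$ and ${}^2\mathsf{G}_2$. Here neither of the two usual devices is available: there is no further BN-rank to reduce, and the groups are not obtained from $\mathrm{SL}_2$ over a ring, since their root subgroups are nonabelian and the structure constants are entangled with $\theta$. One must therefore compute inside the groups themselves, and within that the genuinely delicate point is the second inclusion --- fitting the cyclic torus $H$ of order $q-1$ into only four unipotent factors --- which forces one to exhibit very specific elements of $U$ and $U^{-}$, built from the defining relations, with a prescribed semisimple product. The small Ree group ${}^2\mathsf{G}_2(q)$, whose unipotent radical has order $q^{3}$ and a three-step lower central series with the most involved commutator formulas of the three, is expected to be the heaviest case.
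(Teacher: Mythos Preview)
Your overall plan matches the paper's: dispose of all but the very twisted families via \cite{VavSmSuUnitrEng,SmoUnitrTwisted}, handle ${}^2\mathsf{F}_4$ by Tavgen's rank reduction with rank-$1$ residues of types $\mathsf{A}_1$ and ${}^2\mathsf{C}_2$, and treat ${}^2\mathsf{C}_2$ and ${}^2\mathsf{G}_2$ directly via the Bruhat decomposition. Your first inclusion $BnB\subset UU^-U$ is exactly what the paper establishes, by exhibiting explicit products $x_-(\cdots)\,x_+(\cdots)=x_+(\cdots)\,h(\varepsilon)w$.

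The second inclusion, however, is false as stated. There is \emph{no} identity $h(t)=x_-(\ast)\,x_+(\ast)\,x_-(\ast)$ in $\mathrm{SL}_2$: the $(1,2)$-entry of such a triple product equals the middle parameter, and setting it to $0$ forces the diagonal to be $1$. The same obstruction holds in every rank-$1$ BN-pair: on the associated projective line, $u_3^-$ fixes $0$, a nontrivial $u_2\in U$ moves $0$ off $0$, and $u_1^-$ (which also fixes $0$) cannot bring it back, so $u_1^-u_2u_3^-\in H$ forces $u_2=1$, whence the product lies in $U^-\cap H=\{1\}$. Thus $B\not\subset U^-UU^-$ in any of these groups, and three unipotent factors never suffice for a nontrivial torus element. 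What the paper does instead is prove the four-factor inclusion $T\subset UU^-UU^-$, modelled on the $\mathrm{SL}_2$ identity
\[
\begin{pmatrix}\varepsilon&0\\0&\varepsilon^{-1}\end{pmatrix}
=\begin{pmatrix}1&-1\\0&1\end{pmatrix}
\begin{pmatrix}1&0\\1-\varepsilon&1\end{pmatrix}
\begin{pmatrix}1&\varepsilon^{-1}\\0&1\end{pmatrix}
\begin{pmatrix}1&0\\\varepsilon(\varepsilon-1)&1\end{pmatrix},
\]
after which $B=UT\subset U\cdot UU^-UU^-=UU^-UU^-$ and $BwB\subset UU^-U\subset UU^-UU^-$ give $G=UU^-UU^-$. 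For ${}^2\mathsf{G}_2$ both the $Tw$ and the $T$ computations require a further case split according as $\varepsilon\in\mathbb{F}_{3^{2m+1}}^*$ is a square or minus a square, with separate explicit factorizations in each case; this is indeed the heaviest step, as you anticipated.
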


\section{Suzuki group and large Ree group}
In what follows Suzuki group is considered as a subgroup of symplectic group over a field $F=\mathbb{F}_q$, $q=2^{2m+1}$, pointwise invariant under the action of the exceptional automorphism. The symplectic group is defined as a subgroup of $GL(4,F)$, preservinig a symplectic form with the matrix $(\delta_{i,-j})_{i,j=1\ldots-1}$. Here, as usual, we number the set of indices as $1,2,-2,-1$.

We also define elementary symplectic transvections $T_{ij}(\xi)$, $i\neq j$, $\xi\in F$ of two types as follows:
\begin{align*}
&T_{ij}(\xi) = T_{-j,-i}(\xi) = e+\xi e_{ij} + \xi e_{-j,-i},\ i\neq\pm j\\
&T_{i,-i}(\xi) = e+\xi e_{i,-i},
\end{align*}
They are called respectively short and long elementary root unipotents, according to their identification with the elementary root unipotents in the sense of Chevalley groups, namely
\begin{align*}
&T_{1,2}(\xi) = x_{\alpha}(\xi), & &T_{2,-2}(\xi) = x_{\beta}(\xi), \\ &T_{1,-2}(\xi) = x_{\alpha+\beta}(\xi), & &T_{1,-1}(\xi) =  x_{2\alpha+\beta}(\xi),
\end{align*}
where $\{\alpha,\beta,\alpha+\beta,2\alpha+\beta\}$ are positive roots of $\mathsf{C}_2$.

Denote $\theta=2^m$, so that $q=2\theta^2$, and consider the automorphism $t\mapsto t^\theta$ of $F$. It allows to define the exceptional automorphism of $Sp(4,F)$, corresponding to the symmetry $\alpha\mapsto\overline{\alpha}$ of its Dynkin diagram, as follows
\[ \sigma\colon x_\alpha\left(\xi\right)\longmapsto x_{\overline{\alpha}}\left(\xi^{\lambda(\overline{\alpha})\theta}\right),\text{ where } \lambda(\beta)=\begin{cases}
1 & \text{if $\beta$ is short,} \\ 2 & \text{if $\beta$ is long.}
\end{cases} \]
We define Suzuki group as $Sz(q)={}^2\mathsf{C}_2\left(2^{2m+1}\right)=\left\{g\in Sp(4,F)\mid \sigma(g)=g\right\}$.

This definition is hardly suitable for computations, as it doesn't exhibit any equations on the matrix entries, except for those coming from the symplectic group. Fortunately, Suzuki group inherits Bruhat decomposition $Sz(q)=\bigsqcup BwU$, where $w$ runs over its Weyl group. It can be taken as the definition.

Define $x_+(t,u)=x_\alpha(t^\theta)\,x_\beta(t)\,x_{\alpha+\beta}(t^{\theta+1}+u)\,x_{2\alpha+\beta}(u^{2\theta})$ for any $t,u\in F$. They are exactly the elements of $U^+(\mathsf{C}_2,F)$, stable under the action of $\sigma$. The subgroup $U=\{x_+(t,u)\mid t,u\in F\}$ is a Sylow $2$-subgroup of $Sz(q)$. As matrices
\[ x_+(t,u) =
\begin{pmatrix}
1 & t^\theta & u & t^{2\theta+1}+t^\theta u+u^{2\theta} \\
& 1 & t & t^{\theta+1}+u \\
& & 1 & t^\theta \\
& & & 1
\end{pmatrix}. \]
The elements of the torus, stable under $\sigma$, are of the form \[ h(\varepsilon)=\operatorname{diag}\left(\varepsilon,\varepsilon^{2\theta-1},\varepsilon^{1-2\theta},\varepsilon^{-1}\right). \]
Writing $T=\{ h(\varepsilon)\mid\varepsilon\in F^* \}$ for the torus, we denote by $B=UT$ a Borel subgroup. The elements of the torus normalizer $N$ take one of the following two forms:
\begin{align*}
h(\varepsilon)& =
\begin{pmatrix}
\varepsilon &&& \\ & \varepsilon^{2\theta-1} && \\ && \varepsilon^{1-2\theta} & \\ &&& \varepsilon^{-1}
\end{pmatrix},& \\
h(\varepsilon) w& =
\begin{pmatrix}
&&& \varepsilon \\ && \varepsilon^{2\theta-1} & \\ & \varepsilon^{1-2\theta} && \\ \varepsilon^{-1} &&&
\end{pmatrix},&
\text{ where\quad} w=
\begin{pmatrix}
&&& 1 \\ && 1 & \\ & 1 && \\ 1 &&&
\end{pmatrix}.
\end{align*}
Bruhat decomposition states that $Sz(q)=BNU$, or that every element $g\in Sz(q)$ can be uniquely decomposed into one of the following two products:
\[ g=x_+(t_1,u_1)\,h(\varepsilon)\,w\,x_+(t_2,u_2),\quad
g=x_+(t,u)\,h(\varepsilon). \]
Now we define $x_-(t,u)=w\,x_+(t,u)\,w$ and $U^-=\{x_-(t,u)\mid t,u\in F\}$. The subgroup $U^-={}^wU$ is again $2$-Sylow.
\begin{lemma}\label{lemma:suz}
$Sz(q)=UU^-UU^-$.
\end{lemma}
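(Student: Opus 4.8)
The plan is to use the Bruhat decomposition $Sz(q)=B\sqcup BwB$ (with $B=UT$) to reduce the whole assertion to a single containment for the big cell, and then to settle that containment by a direct computation with the explicit matrices $x_\pm(t,u)$ written out above.

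I would begin with two elementary observations: $U^-={}^wU$ (as given) and $B\cap U^-=\{1\}$ (elements of $B$ are upper triangular, those of $U^-$ lower unitriangular). The crucial reduction is the claim that \emph{it suffices to prove $BwB\subseteq UU^-U$}. Indeed, granting this, every $g$ in the big cell already lies in $UU^-U\subseteq UU^-UU^-$; and if $g\in B$, then choosing any $v\in U^-\setminus\{1\}$ we have $gv\notin B$ (because $g^{-1}B=B$ and $v\notin B$), hence $gv\in BwB\subseteq UU^-U$, so that $g\in UU^-Uv^{-1}\subseteq UU^-UU^-$. Since $Sz(q)=B\cup BwB$, this gives $Sz(q)=UU^-UU^-$.

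To prove $BwB\subseteq UU^-U$ I would first rewrite the big cell as $BwB=U\,(Tw)\,U$, which is a standard consequence of the normalization relations $Tw=wT$ and $TU=UT$. As $U\,(UU^-U)\,U=UU^-U$, it is then enough to show that $h(\varepsilon)w\in UU^-U$ for each $\varepsilon\in F^*$, i.e.\ that the equation
\[
x_+(t_1,u_1)\,x_-(t_2,u_2)\,x_+(t_3,u_3)=h(\varepsilon)w
\]
is solvable with $t_i,u_i\in F$. This is the rank-one feature already present in $SL_2$, where $\operatorname{diag}(a,a^{-1})w$ is an upper--lower--upper unitriangular product; since $x_-(t,u)=w\,x_+(t,u)\,w$ is obtained from $x_+(t,u)$ by reflecting in the antidiagonal, the recipe is simply to multiply the three explicit unitriangular matrices and match the entries against the antidiagonal matrix $h(\varepsilon)w$. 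One expects $t_2$ to be essentially forced to be a fixed power of $\varepsilon$, with the remaining parameters then determined.

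The main obstacle is this last matrix identity. Matching entries is routine, but the entries of $x_\pm(t,u)$ carry the twisting exponents $\theta,\,\theta+1,\,2\theta+1,\,2\theta$, so one is left with a small system of polynomial equations in $t_1,u_1,t_2,u_2,t_3,u_3$ over $\mathbb F_q$, and the real issue is solvability over $\mathbb F_q$ itself rather than over some extension. In characteristic $2$ this should reduce to the bijectivity of the Frobenius-type maps $x\mapsto x^\theta$ and $x\mapsto x^{2\theta}$ on $F$ (so that extracting $\theta$-th roots costs nothing), together with the relation $q=2\theta^2$ and the constraints that carve $U$ out of the full unipotent radical; I would expect the solution to be writable in closed form in $\varepsilon$ and its $\theta$-powers, after which the check is mechanical. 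The same three steps — reduce to the big cell, reduce to a Weyl-times-torus factorization, and compute — are what I would then carry out for the large Ree group, with considerably heavier bookkeeping.
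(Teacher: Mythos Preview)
Your reduction is correct and in fact cleaner than the paper's: the paper separately verifies both $Tw\subset UU^-U$ and $T\subset UU^-UU^-$ by explicit matrix computations, whereas your trick of multiplying $g\in B$ by some $v\in U^-\setminus\{1\}$ to force it into the big cell dispenses with the second computation entirely. (The paper's Addendum acknowledges exactly this shortcut, attributing it to Lemma~5 of \cite{GarLevMarSimSylowWidth}.)

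What you leave undone, however, is the only substantive step: actually exhibiting $t_i,u_i$ with $x_+(t_1,u_1)\,x_-(t_2,u_2)\,x_+(t_3,u_3)=h(\varepsilon)w$. Your general remarks about the bijectivity of $x\mapsto x^\theta$ on $F$ are true but do not constitute a proof; the paper simply writes down a closed-form solution and checks it, namely
\[
x_-\bigl(\varepsilon^{1-2\theta},0\bigr)\,x_+\bigl(0,\varepsilon^\theta\bigr)=x_+\bigl(\varepsilon^{2\theta-1},0\bigr)\,h(\varepsilon)\,w,
\]
which immediately gives $h(\varepsilon)w\in UU^-U$. Until an identity of this kind is on the page, your proposal is a plan rather than a proof.

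One side remark: for ${}^2\mathsf{F}_4$ the paper does \emph{not} repeat the computation ``with considerably heavier bookkeeping'' as you anticipate; it invokes Tavgen's rank reduction theorem to reduce to the already-handled rank-one cases $\mathsf{A}_1$ and ${}^2\mathsf{C}_2$.
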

\begin{proof}
In view of Bruhat decomposition it is sufficient to show that $Tw\subset UU^-U$ and $T\subset UU^-UU^-$.

For the first one fix an element $\varepsilon\in F^*$ and consider $h(\varepsilon)w$. Write
\[ x_-\left(\varepsilon^{1-2\theta},0\right)\,x_+\left(0,\varepsilon^\theta\right)=
\begin{pmatrix}
1 & 0 & \varepsilon^\theta & \varepsilon \\
\varepsilon^{\theta-1} & 1 & \varepsilon^{2\theta-1} \\
\varepsilon^{-\theta} & \varepsilon^{1-2\theta} & & \\
\varepsilon^{-1} & & &
\end{pmatrix}=
x_+\left(\varepsilon^{2\theta-1},0\right)\,h(\varepsilon)w. \]
The decomposition for the torus is suggested by the decomposition of the torus in $SL(2,R)$:
\[ \begin{pmatrix}
\varepsilon & 0 \\ 0 & \varepsilon^{-1}
\end{pmatrix} =
\begin{pmatrix}
1 & -1 \\ 0 & 1
\end{pmatrix}
\begin{pmatrix}
1 & 0 \\ 1-\varepsilon & 1
\end{pmatrix}
\begin{pmatrix}
1 & \varepsilon^{-1} \\ 0 & 1
\end{pmatrix}
\begin{pmatrix}
1 & 0 \\ \varepsilon(\varepsilon-1) & 1
\end{pmatrix}. \]
Consider the following product (note the entry in the upper left corner):
\begin{align*}
g_1 & =  x_+\left(0,1\right)\cdot x_-\left(1,1+\varepsilon^\theta\right)=
\begin{pmatrix}
\colorbox{Gray!20}{$\varepsilon$} & \varepsilon^\theta & 0 & 1 \\
\varepsilon+\varepsilon^\theta & \varepsilon^\theta & 1 & 1 \\
\varepsilon^\theta & 1 & 1 & 0 \\
1+\varepsilon+\varepsilon^\theta & 1+\varepsilon^\theta & 1 & 1
\end{pmatrix},\\
g_2 & = g_1\cdot x_+\left(\varepsilon^{1-2\theta},\varepsilon^{-\theta}\right) =\\
& = \begin{pmatrix}
\varepsilon & 0 & 0 & 0 \\
\varepsilon+\varepsilon^\theta & \varepsilon^{2\theta-1} & 0 & 0 \\
\varepsilon^\theta & 1+\varepsilon^{2\theta-1} & \varepsilon^{1-2\theta} & 0 \\
1+\varepsilon+\varepsilon^\theta & 1+\varepsilon^{\theta-1}+\varepsilon^{2\theta-1} & \varepsilon^{-\theta}+\varepsilon^{1-2\theta} & \varepsilon^{-1}
\end{pmatrix}.
\end{align*}
One may even not bother himself with computing the entries of $g_2$ below the diagonal and just note that $g_2$ is of the form $h(\varepsilon)\,x_-(t,u)$ for some $t,u\in F$, as required. In fact $t=\varepsilon^{2\theta-1}(1+\varepsilon^{2\theta-1})$ and $u=\varepsilon+\varepsilon^\theta+\varepsilon^{2\theta}$.
\end{proof}
The large Ree group is considered as a subgroup of the Chevalley group $G(\mathsf{F}_4,\mathbb{F}_q)$, $q=2^{2m+1}$, defined by the exceptional symmetry of its Dynkin diagram in the same way as Suzuki group. It is not a rank $1$ group, so we simply refer to the Tavgen theorem (which works also for very twisted groups), reducing the question to the already known cases of groups of types $\mathsf{A}_1$ and ${}^2\mathsf{C}_2$, thus giving
\begin{corollary*}
${}^2\mathsf{F}_4\left(2^{2m+1}\right)=UU^-UU^-$.
\end{corollary*}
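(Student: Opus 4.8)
The plan is to avoid any matrix computation inside $\mathsf{F}_4$ and instead obtain the corollary from the rank-one case via Tavgen's rank reduction theorem \cite{TavgenBoundGen,TavgenBoundGenProc}, exactly as in \cite{VavSmSuUnitrEng} and \cite{SmoUnitrTwisted}. Recall that ${}^2\mathsf{F}_4(q)$, $q=2^{2m+1}$, is the group of fixed points of the automorphism $\sigma$ of $G(\mathsf{F}_4,\mathbb{F}_q)$ built from the exceptional symmetry of the Dynkin diagram $\alpha_1-\alpha_2\Rightarrow\alpha_3-\alpha_4$ (which reverses it, so $\alpha_1\leftrightarrow\alpha_4$ and $\alpha_2\leftrightarrow\alpha_3$) twisted by $t\mapsto t^\theta$, $\theta=2^m$; here $U$ is generated by the $\sigma$-stable positive root unipotents and is a Sylow $2$-subgroup, while $U^-={}^wU$. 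The twisted root system has rank two, its two fundamental $\sigma$-orbits being the adjacent pair $\{\alpha_2,\alpha_3\}$ and the non-adjacent pair $\{\alpha_1,\alpha_4\}$.

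Since the twisted diagram has only two nodes, one application of Tavgen's theorem reduces the factorization $UU^-UU^-$ for ${}^2\mathsf{F}_4(q)$ to the corresponding length-$4$ factorizations of the two rank-one subgroups obtained by deleting one fundamental orbit. Deleting $\{\alpha_1,\alpha_4\}$ leaves the subgroup generated by the $\sigma$-stable elements attached to the subsystem of type $\mathsf{C}_2$ on $\{\alpha_2,\alpha_3\}$, with $\sigma$ acting as the non-trivial graph automorphism of $\mathsf{C}_2$ twisted by $t\mapsto t^\theta$; this is precisely ${}^2\mathsf{C}_2(q)=Sz(q)$, for which the required decomposition is Lemma~\ref{lemma:suz}. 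Deleting $\{\alpha_2,\alpha_3\}$ leaves the subgroup generated by the $\sigma$-stable elements attached to the subsystem of type $\mathsf{A}_1\times\mathsf{A}_1$ on $\{\alpha_1,\alpha_4\}$, and since $\sigma$ interchanges the two commuting factors, this is a (twisted-diagonal) copy of $SL_2$ over the field $\mathbb{F}_q$; as $\mathbb{F}_q$ has stable rank $1$, the four-term decomposition of $SL_2$ established in \cite{VavSmSuUnitrEng} applies. With both rank-one cases in hand, Tavgen's theorem gives ${}^2\mathsf{F}_4(q)=UU^-UU^-$.

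I expect the only delicate point — and thus the main obstacle — to be verifying that Tavgen's rank reduction theorem is genuinely valid for \emph{very} twisted groups, rather than just quoting its untwisted formulation. Its proof uses only the Levi decomposition of the (twisted) maximal parabolic subgroups, the fact that two opposite unipotent radicals generate the corresponding Levi-type subgroup, and the commutator relations among the twisted root subgroups; all of these remain available for ${}^2\mathsf{C}_2$ and ${}^2\mathsf{F}_4$, so the argument carries over without change, but this should be stated explicitly. (One could in principle sidestep this by imitating the Bruhat-decomposition computation of Lemma~\ref{lemma:suz} directly in ${}^2\mathsf{F}_4$, but there the Weyl group is dihedral of order $16$ and $|U|=q^{12}$, so such a brute-force check would be far more laborious and is unnecessary.)
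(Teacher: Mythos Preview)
Your proof is correct and is precisely the paper's argument: the paper simply invokes Tavgen's rank reduction theorem, noting parenthetically that it applies to very twisted groups, to reduce the question to the already-known rank-one cases $\mathsf{A}_1$ and ${}^2\mathsf{C}_2$. You have in fact supplied more detail than the paper---explicitly identifying the two rank-one Levi subgroups arising from the two fundamental $\sigma$-orbits and articulating why Tavgen's proof carries over---but the route is identical.
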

\section{Small Ree group}
\epigraph{I'm now trying to convince my students that they shouldn't multiply two 3-by-3 matrices by hand: just like multiplying two multidigit numbers, it doesn't make any sense.}{N.~Vavilov at PhML-2014}
We work with small Ree group as a subgroup of $G(\mathsf{G}_2,F)$ for a field $F=\mathbb{F}_q$, $q=3^{2m+1}=3\theta^2$. $G(\mathsf{G}_2,F)$ is considered in its 7-dimensional representation, and the elementary generators are chosen as follows (the basis is numbered as $1,2,3,0,-3,-2,-1$):
\begin{align*}
\begin{sideways}\text{short}\end{sideways} & \left[
\begin{aligned}
&x_{\alpha}(\xi)=e+\xi(e_{12}-e_{30}+2e_{0,-3}-e_{-2,-1})-\xi^2e_{3,-3},\\
&x_{\alpha+\beta}(\xi)=e+\xi(e_{13}+e_{20}-2e_{0,-2}-e_{-3,-1})-\xi^2e_{2,-2},\\
&x_{2\alpha+\beta}(\xi)=e+\xi(-e_{10}+e_{2,-3}-e_{3,-2}+2e_{0,-1})-\xi^2e_{1,-1},\\
&x_{-\alpha}(\xi)=e+\xi(e_{21}-2e_{03}+e_{-3,0}-e_{-1,-2})-\xi^2e_{-3,3},\\
&x_{-\alpha-\beta}(\xi)=e+\xi(e_{31}+2e_{02}-e_{-2,0}-e_{-1,-3})-\xi^2e_{-2,2},\\
&x_{-2\alpha-\beta}(\xi)=e+\xi(-2e_{01}+e_{-3,2}-e_{-2,3}+e_{-1,0})-\xi^2e_{-1,1},\\
\end{aligned}\right.\\
\begin{sideways}\text{long}\end{sideways} & \left[
\begin{aligned}
&x_{\beta}(\xi)=e+\xi(-e_{23}+e_{-3,-2})=x_{-\beta}(\xi)^t,\\
&x_{3\alpha+\beta}(\xi)=e+\xi(e_{1,-3}-e_{3,-1})=x_{-3\alpha+\beta}(\xi)^t,\\
&x_{3\alpha+2\beta}(\xi)=e+\xi(-e_{1,-2}+e_{2,-1})=x_{-3\alpha-2\beta}(\xi)^t.
\end{aligned}\right.
\end{align*}
This particular choice of the signs of the structure constants coincides with one in \cite[Section~12.4]{CarterLie}, thus allowing to define an automorphism of $G(\mathsf{G}_2,F)$ by
\[ \sigma\colon x_\alpha\left(\xi\right)\longmapsto x_{\overline{\alpha}}\left(\xi^{\lambda(\overline{\alpha})\theta}\right),\text{ where } \lambda(\beta)=\begin{cases}
1, & \text{if $\beta$ is short,} \\ 3, & \text{if $\beta$ is long.}
\end{cases} \]
Small Ree group ${}^2\mathsf{G}_2\left(3^{2m+1}\right)$ is then defined as the subgroup of points fixed by $\sigma$.

Elements of $U^+(\mathsf{G}_2,F)$ fixed by $\sigma$ are of the form
\begin{align*}
& x_+(t,u,v)=x_1(t)x_2(u)x_3(v),\text{ where }t,u,v\in F\text{ and}\\
&\quad x_1(t)= x_\alpha\left(t^\theta\right)\,x_\beta\left(t\right)\,x_{\alpha+\beta}\left(t^{\theta+1}\right)\,x_{2\alpha+\beta}\left(t^{2\theta+1}\right),\\
&\quad x_2(u)=x_{\alpha+\beta}\left(u^\theta\right)\,x_{3\alpha+\beta}\left(u\right),\\
&\quad x_3(v)=x_{2\alpha+\beta}\left(v^\theta\right)\,x_{3\alpha+2\beta}\left(v\right).
\end{align*}
$U^+={}^\sigma U^+(\mathsf{G}_2,F)$ is a Sylow $3$-subgroup ${}^2\mathsf{G}_2\left(3^{2m+1}\right)$.

The invariant elements of the torus are of the form $h(\varepsilon)=h_\alpha\left(\varepsilon\right)h_\beta\left(\varepsilon^{3\theta}\right)$. Denote by $w=\left(w_\alpha(1)w_\beta(1)\right)^3$ a lift of the longest element of $W(\mathsf{G}_2)$. It lies in ${}^2\mathsf{G}_2$ and represents the nontrivial element of its Weyl group. As matrices
\[ h\left(\varepsilon\right)= \operatorname{diag}\left(\varepsilon,\varepsilon^{3\theta-1},\varepsilon^{2-3\theta},1,\varepsilon^{3\theta-2},\varepsilon^{1-3\theta},\varepsilon^{-1}\right),\quad w=\begin{pmatrix}
&&-1 \\ &\iddots& \\ -1&&
\end{pmatrix}. \]
Bruhat decomposition states that every element $g$ of small Ree group can be written uniquely in one of the two forms
\[ g=x_+(t_1,u_1,v_1)\,h(\varepsilon)\,w\,x_+(t_2,u_2,v_2),\quad g=x_+(t,u,v)\,h(\varepsilon). \]
\begin{lemma}\label{lemma:ree}
${}^2\mathsf{G}_2\left(3^{2m+1}\right)=UU^-UU^-$.
\end{lemma}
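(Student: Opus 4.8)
The plan is to repeat, with the obvious changes, the proof of Lemma~\ref{lemma:suz}. Write $U=U^+$ and $U^-={}^wU^+$ as in the Suzuki case. By the Bruhat decomposition stated above, ${}^2\mathsf{G}_2\left(3^{2m+1}\right)=UT\cup UTwU$. Since $U$ and $U^-$ are subgroups, $UU=U$ and $e\in U\cap U^-$, so the two inclusions
\[ T\subseteq UU^-UU^- \qquad\text{and}\qquad Tw\subseteq UU^-U \]
already give $UT\subseteq U\cdot UU^-UU^- = UU^-UU^-$ and $UTwU\subseteq U\cdot UU^-U\cdot U = UU^-U\subseteq UU^-UU^-$. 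Hence it suffices to establish these two inclusions, the reverse inclusion ${}^2\mathsf{G}_2\supseteq UU^-UU^-$ being clear.

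For $Tw\subseteq UU^-U$ one fixes $\varepsilon\in F^*$ and, guided both by the Suzuki computation and by the exponents occurring in $h(\varepsilon)=\operatorname{diag}\left(\varepsilon,\varepsilon^{3\theta-1},\varepsilon^{2-3\theta},1,\varepsilon^{3\theta-2},\varepsilon^{1-3\theta},\varepsilon^{-1}\right)$, writes down parameter triples $a,b,c\in F^3$ that are monomials in $\varepsilon$ and checks, by multiplying the $7\times 7$ matrices $x_-(b)$ and $x_+(c)$, that $x_-(b)\,x_+(c)=x_+(a)\,h(\varepsilon)\,w$; equivalently $h(\varepsilon)w=x_+(a)^{-1}\,x_-(b)\,x_+(c)\in UU^-U$.

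For $T\subseteq UU^-UU^-$ one imitates Lemma~\ref{lemma:suz} more closely, taking the cue from the decomposition of the torus of $SL_2$ displayed in its proof: one picks explicit parameter triples in $\varepsilon$ and forms a product $g_1=x_+(\cdot)\,x_-(\cdot)$ arranged so that its $(1,1)$ entry equals $\varepsilon$, then $g_2=g_1\,x_+(\cdot)$ arranged so that $g_2$ becomes lower triangular with diagonal $h(\varepsilon)$. By uniqueness of the Bruhat decomposition $g_2$ is then automatically of the form $h(\varepsilon)\,x_-(t,u,v)$ for some $t,u,v\in F$ — whose values one need not even determine — so that $h(\varepsilon)=g_1\,x_+(\cdot)\,x_-(t,u,v)^{-1}\in UU^-UU^-$.

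The only real difficulty is the bookkeeping. In contrast with the Suzuki case the unipotent generators depend on three parameters, the matrices are $7\times 7$, and the computations take place in characteristic $3$, where the structure constants carry signs and $2=-1$, so the arithmetic is heavier; moreover one has to keep track of the twisted exponents and of the Frobenius identity $t^{q}=t$, i.e.\ $t^{3\theta^2}=t$, when reducing powers of $\varepsilon$. Conceptually nothing new happens: the parameter triples are the natural analogues of those used in Lemma~\ref{lemma:suz}, and uniqueness of the Bruhat decomposition again spares half of the second computation.
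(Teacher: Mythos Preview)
Your high-level strategy is exactly the paper's: reduce via Bruhat to the two inclusions $Tw\subseteq UU^-U$ and $T\subseteq UU^-UU^-$, then exhibit explicit parameter triples verifying each. But what you have written is a plan, not a proof, and the plan contains a genuine oversight that is more than bookkeeping.

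You assert that the parameters can be taken as ``monomials in $\varepsilon$'', in direct analogy with Lemma~\ref{lemma:suz}. This is precisely where characteristic~$3$ bites, and not merely through signs. In $\mathbb{F}_{2^{2m+1}}$ squaring is bijective, so one never needs $\sqrt{\varepsilon}$; in $\mathbb{F}_{3^{2m+1}}$ only half of $F^*$ are squares, and the factorizations that actually work are expressed in a square root. The paper splits \emph{both} computations into the cases $\varepsilon=\lambda^2$ and $\varepsilon=-\lambda^2$, and the parameter triples involve odd powers of $\lambda$ (for instance $\lambda^{3\theta}$ and $\lambda^{3-6\theta}$, with $\theta=3^m$ odd), hence are not integer powers of~$\varepsilon$. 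Moreover the two cases are not related by a simple sign change: the nonzero pattern of the triples differs between $\varepsilon=\lambda^2$ and $\varepsilon=-\lambda^2$ in both the $Tw$ step and the $T$ step. So the sentence ``the only real difficulty is the bookkeeping'' understates what is required: the square/non-square dichotomy is a structural feature of the odd-characteristic situation with no analogue in the Suzuki calculation, and without it you cannot write down the explicit factorizations your sketch promises.
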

\begin{proof}
First consider $h(\varepsilon)w$ for some $\varepsilon\in F^*$. Any element of $F=\mathbb{F}_{3\theta^2}$ is either a square or a minus square. If $\varepsilon=\lambda^2$, write
\begin{multline*}
x_-\left(-\lambda^{3-6\theta},0,\lambda^{-3\theta}\right)\,x_+\left(0,0,\lambda^{3\theta}\right)=\\
=\begin{pmatrix}
1 & 0 & 0 & -\lambda & 0 & -\lambda^{3\theta} & -\lambda^2 \\
\lambda^{3\theta-2} & 1 & 0 & -\lambda^{3\theta-1} & \lambda & -\lambda^{6\theta-2} & 0 \\
-\lambda^{1-3\theta} & -\lambda^{3-6\theta} & 1 & \lambda^{2-3\theta} & -\lambda^{4-6\theta} & 0 & 0 \\
-\lambda^{-1} & 0 & \lambda^{3\theta-2} & -1 & 0 & 0 & 0 \\
-\lambda^{3\theta-3} & -\lambda^{-1} & -\lambda^{6\theta-4} & 0 & 0 & 0 & 0 \\
\lambda^{-3\theta} & -\lambda^{2-6\theta} & 0 & 0 & 0 & 0 & 0 \\
-\lambda^{-2} & 0 & 0 & 0 & 0 & 0 & 0
\end{pmatrix}=\\
= x_+\left(\lambda^{6\theta-3},0,-\lambda^{3\theta}\right)\, h\left(\lambda^2\right)w.
\end{multline*}
If $\varepsilon=-\lambda^2$, write
\begin{multline*}
x_-\left(-\lambda^{3-6\theta},-\lambda^{3\theta-3},\lambda^{-3\theta}\right)\,x_+\left(\lambda^{6\theta-3},0,0\right)=\\
=\begin{pmatrix}
1 & \lambda^{2-3\theta} & 0 & 0 & \lambda^{3-3\theta} & -\lambda^{3\theta} & \lambda^2 \\
\lambda^{3\theta-2} & -1 & -\lambda^{6\theta-3} & \lambda^{3\theta-1} & -\lambda & -\lambda^{6\theta-2} & 0 \\
0 & -\lambda^{3-6\theta} & -1 & \lambda^{2-3\theta} & \lambda^{4-6\theta} & 0 & 0 \\
0 & \lambda^{1-3\theta} & -\lambda^{3\theta-2} & -1 & 0 & 0 & 0 \\
\lambda^{3\theta-3} & -\lambda^{-1} & \lambda^{6\theta-4} & 0 & 0 & 0 & 0 \\
-\lambda^{-3\theta} & -\lambda^{2-6\theta} & 0 & 0 & 0 & 0 & 0 \\
\lambda^{-2} & 0 & 0 & 0 & 0 & 0 & 0
\end{pmatrix}=\\
=x_+\left(\lambda^{6\theta-3},\lambda^{3-3\theta},\lambda^{3\theta}\right)\,h\left(-\lambda^2\right)w.
\end{multline*}
Now consider an element $h(\varepsilon)$, $\varepsilon\in F^*$. Again, if $\varepsilon=\lambda^2$, let
\begin{multline*}
g_1=x_+\left(0,1,0\right)\,x_-\left(0,1,\lambda^{3\theta}\right)=\\
=\begin{pmatrix}
\colorbox{Gray!20}{$\lambda^2$} & \lambda^{3\theta} & 0 & \lambda & 0 & 0 & -1 \\
-\lambda^{3\theta} & 0 & -\lambda & 0 & 0 & -1 & 0 \\
\lambda^2 & \lambda^{3\theta}+\lambda & 0 & \lambda & -1 & 0 & -1 \\
\lambda^{3\theta}+\lambda & 0 & \lambda & -1 & 0 & 1 & 0 \\
\lambda^2 & \lambda^{3\theta} & -1 & \lambda & 0 & 0 & -1 \\
\lambda^{3\theta}-\lambda & -1 & \lambda & 1 & 0 & 1 & 0 \\
-1-\lambda^2 & -\lambda^{3\theta}-\lambda & 1 & -\lambda & 1 & 0 & 1
\end{pmatrix}.
\end{multline*}
Multiplication by $x_+\left(-\lambda^{3-6\theta},-\lambda^{3\theta-3},-\lambda^{-3\theta}\right)$ on the right eliminates all the entries above the diagonal, which gives
\begin{multline*}
g_1\cdot x_+\left(-\lambda^{3-6\theta},-\lambda^{3\theta-3},-\lambda^{-3\theta}\right)=\\
=h\left(\lambda^2\right)\,x_-\left(\lambda^{6\theta-3},-\lambda^{3-3\theta}(1+\lambda^{3-3\theta}),\lambda^{3\theta}+\lambda\right).
\end{multline*}
If $\varepsilon=-\lambda^2$, put
\begin{multline*}
g_1=x_+\left(-1,-1,1\right)\,x_-\left(1,0,\lambda^{3\theta}\right)=\\
=\begin{pmatrix}
\colorbox{Gray!20}{$-\lambda^2$} & -\lambda^{3\theta} & 0 & -\lambda & 0 & 0 & 1 \\
\lambda^2-\lambda^{3\theta} & \lambda^{3\theta} & -\lambda & \lambda & 0 & -1 & -1 \\
-\lambda^2-\lambda^{3\theta} & -\lambda-\lambda^{3\theta} & -\lambda & -\lambda & 1 & -1 & 1 \\
\lambda-\lambda^{3\theta} & \lambda & -\lambda & -1 & -1 & -1 & 0 \\
-\lambda-\lambda^{3\theta} & \lambda & 1-\lambda & 1 & -1 & -1 & 0 \\
-\lambda^2-\lambda-\lambda^{3\theta} & \lambda-\lambda^{3\theta}-1 & -1-\lambda & 1-\lambda & -1 & -1 & 1 \\
1+\lambda^{3\theta}-\lambda^2 & -\lambda^{3\theta}-1 & 1+\lambda & -\lambda & 0 & 1 & 1
\end{pmatrix}.
\end{multline*}
A straightforward calculation shows that
\[
g_1\cdot x_+\left(-\lambda^{3-6\theta},-\lambda^{3\theta-3},-\lambda^{-3\theta}\right)=h\left(-\lambda^2\right)\,x_-\left(u,v,uv\right),\]
where $u=\lambda^{6\theta-3}\left(\lambda^{6\theta-3}-1\right)$ and $v=-u^{3\theta+1}$.
\end{proof}
\section*{Addendum}
Immediately after the publication of the present paper another paper \cite{GarLevMarSimSylowWidth} appeared, containing a proof of the same result as out main Theorem. The proof goes along the same line, with the following difference:
\begin{itemize}
\item The rank reduction is very similar to that of Tavgen~\cite{TavgenBoundGen,TavgenBoundGenProc}, but reduced directly to the groups of rank $1$ rather than the subgroups of submaximal rank.
\item The analysis of rank $1$ groups consists of two steps. The first is Lemma~5, which shows that the factorisation of length $4$ exists when every lift of the nontrivial element of the Weyl group to the torus normalizer can be decomposed into the product of three unitriangular factors.
\item The second is deducing the existence of that decomposition from the result of Proposition~4.1 of \cite{CheEllGorGaussDecomp}. The latter is proved case-by-case, and the computation is essentially the same as in our Lemmas~\ref{lemma:suz} and \ref{lemma:ree}.
\end{itemize}
It is now clear that the shortest and simplest proof would be the following: 1)~Tavgen rank reduction theorem; 2)~Lemma~5 of \cite{GarLevMarSimSylowWidth}; 3)~the part of Lemmas~\ref{lemma:suz} and \ref{lemma:ree} dealing with the lift of the nontrivial element of the Weyl group, and the same proof for $\mathsf{A}_1$ and ${}^2\mathsf{A}_2$.
\section*{Acknowledgments}
Research was supported by RFFI (grants 12-01-00947-a and 14-01-00820) and by State Financed task project 6.38.191.2014 at Saint Petersburg State University.
\bibliographystyle{amsalpha}
\bibliography{suzuki-ree-sylow}
\end{document}